\documentclass{amsart}
\usepackage{amsmath}
\usepackage{amsfonts}
\usepackage{amstext}
\usepackage{amsbsy}
\usepackage{amsopn}
\usepackage{amsxtra}
\usepackage{upref}
\usepackage{amsthm}
\usepackage{amsmath}
\usepackage{amssymb}
\usepackage{graphicx}
\usepackage[textures,bookmarks=false]{hyperref}
\usepackage{mathrsfs}

\newtheorem{prop}{Proposition}[section]

\newtheorem{lema}{Lemma}[section]

\newtheorem{eje}{Example}[section]

\newtheorem{introtheorem}{Theorem}

\newtheorem{introcorollary}{Corollary}

\title{The Lyapunov spectrum is not always concave}

\date{December 5, 2008}

\begin{thanks}
{The first author was partially supported by  Proyecto Fondecyt 11070050}
\end{thanks}

\author{Godofredo Iommi} 
\author{Jan Kiwi}
\address{Facultad de Matem\'aticas,
Pontificia Universidad Cat\'olica de Chile, Avenida Vicu\~na Mackenna 4860, Santiago, Chile}
\email{giommi@mat.puc.cl}
\urladdr{http://www.mat.puc.cl/\textasciitilde giommi/}
\address{Facultad de Matem\'aticas,
Pontificia Universidad Cat\'olica de Chile, Avenida Vicu\~na Mackenna 4860, Santiago, Chile}
\thanks{Both authors were partially supported by Research Network on Low Dim
ensional Systems, PBCT/CONICYT, Chile.}
\urladdr{www.mat.puc.cl/\~{}jkiwi}

\begin{document}

\begin{abstract}
We characterize one-dimensional compact repellers having non-concave Lyapunov spectra. 
For linear maps with two branches we give an explicit condition that characterizes non-concave Lyapunov spectra.
\end{abstract}

\maketitle
\section{Introduction}
The rigorous study of the Lyapunov spectrum finds its roots in the
pioneering work of H. Weiss \cite{w1}.  For the purpose of simplicity
we restrict  our discussion to a  dynamical system
$T : \Lambda \to \Lambda$ where $\Lambda$ is a compact subset of an interval. 
The Lyapunov exponent $\lambda (x)$ of $T$ at $x$ 
is 
$$\lambda (x) := \lim_{n \to \infty} \frac{1}{n} \log |(T^n)'(x)|,$$
whenever this limit exists.
The Lyapunov spectrum $L$ encodes the decomposition of
the phase space $\Lambda$ into level sets $J_\alpha$
of the Lyapunov exponent (i.e. the set where $\lambda(x) = \alpha$).
More precisely, $L$ is the function that
assigns to each $\alpha$ (in an appropriate interval) the Hausdorff 
dimension of $J_\alpha$. 
Weiss proved that the Lyapunov spectrum is real analytic.
A rather surprising result in light of the fact
that this decomposition is fairly complicated. For instance, 
each level set turns out to be dense in $\Lambda$.
Relying
on his previous joint results with Pesin~\cite{pw1}, Weiss studied  the Lyapunov spectrum via the
dimension spectrum of the measure of maximal entropy.
It is worth to mention that not only the analyticity of $L$ is obtained with 
this approach but it also gives, for each $\alpha$, an equilibrium measure  such that $J_\alpha$ has full measure and
the Hausdorff dimension of the measure coincides with that of $J_\alpha$.

In between the wealth of novel and correct results about Lyapunov spectra
briefly summarized above, the aforementioned paper~\cite{w1} unfortunately contains a
claim which is not fully correct. Namely, that compact conformal
repellers have concave Lyapunov spectra~\cite[Theorem 2.4
  (1)]{w1}.  Recently, it has been shown that non-compact conformal repellers may have non-concave Lyapunov spectra  (see
\cite{ks} for the Gauss map and \cite{io} for the Renyi map).  Puzzled
by this phenomena, our aim here is to better understand the concavity
properties of Lyapunov spectra. 
On one hand  we characterize interval maps
with two linear full branches for which the Lyapunov spectra is not
concave. In particular, we not only show that compact conformal repellers (see Example \ref{ex}) may have non-concave spectra,
but that this already occurs in the simplest possible context.  On the other hand, we establish some
general conditions under which the Lyapunov spectra is non-concave.
Roughly speaking, the asymptotic variance (of the $-t \log |T'|$ potential)
should be sufficiently large (in a certain sense).

The class of maps that we consider is defined as follows.
Given a pairwise disjoint finite family of closed intervals $I_1, \dots, I_n$
contained in $[0,1]$ we say that a map:
$$T: \bigcup_{i=1}^n I_i \to [0,1]$$
is a {\sf cookie-cutter map with $n$ branches} if the following holds:
\begin{enumerate}
\item $T(I_i)= [0,1]$ for every $i \in \{1 , \dots, n\}$,
\item The map $T$ is of class $C^{1 + \epsilon}$ for some $\epsilon >0$,
\item $|T' (x)| > 1$ for every $x \in I_1 \cup \cdots \cup I_n $.
\end{enumerate}
We say that $T$ is a {\sf linear cookie-cutter map} if $T$ restricted to each one of the intervals
$I_i$ is an affine map. The {\sf repeller} $\Lambda \subset [0,1]$  of $T$ is 
\[ \Lambda:= \bigcap_{n=0}^{\infty} T^{-n} ([0,1]).      \]
The {\sf Lyapunov exponent} of the map $T$ at the point $x \in [0,1]$ is defined by
\[  \lambda(x) = \lim_{n \to \infty} \frac{1}{n} \log |(T^n)'(x)|, \]
whenever the limit exists.  
Let us stress that the set of points for which the Lyapunov exponent does not exist has full Hausdorff dimension~\cite{bs}.

We will mainly be concerned with the Hausdorff dimension of the level sets of $\lambda$.
More precisely, the range of $\lambda$ is an interval $[\alpha_{min}, \alpha_{\max}]$ and
the {\sf multifractal spectrum of the Lyapunov exponent}  is the function given by:
$$\begin{array}{rccc}
  L:& [\alpha_{min}, \alpha_{\max}] & \rightarrow & \mathbb{R} \\
    & \alpha & \mapsto & \dim_H ( J_\alpha = \{ x \in \Lambda \mid \lambda(x) = \alpha \}),
\end{array}$$
where $\dim_H(J)$ denotes the Hausdorff dimension of $J$.
For short we say that $L$ is the {\sf Lyapunov spectrum of $T$}.

In our first result we consider linear cookie-cutters with two branches and obtain conditions on the slopes that ensure that the Lyapunov spectrum is concave. This result can also be used to construct examples of non-concave Lyapunov spectrum.

\begin{introtheorem}
  \label{A}
  Consider the linear cookie-cutter map with two branches 
$$T:  \left[0, \frac{1}{a} \right] \cup \left[1-\frac{1}{b}, 1\right]  \to  \left[0,1\right] $$
 defined by
  $$T(x) =  \begin{cases} 
                                 a x & \text{ if } x \le \frac{1}{a}, \\
                                 b x + 1 - b & \text{ if } x \ge 1-\frac{1}{b}.
\end{cases} $$
  Then the Lyapunov spectrum $L:[\log a , \log b] \to \mathbb{R}$ of $T$ is concave if and only if 
$$\dfrac{\log b}{\log a} \leq \dfrac{\sqrt{2 \log 2} + 1}{\sqrt{2 \log 2} - 1} \approxeq 12.2733202...$$
\end{introtheorem}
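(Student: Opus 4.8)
The plan is to turn the concavity question into a single explicit inequality, exploiting that a linear two-branch cookie-cutter has a completely explicit Lyapunov spectrum. Code $\Lambda$ by $\{1,2\}^{\mathbb{N}}$: for $x$ with itinerary $(x_i)$ one has $\frac{1}{n}\log|(T^n)'(x)|=\frac{1}{n}\sum_{i=0}^{n-1}\log c_{x_i}$ with $c_1=a$, $c_2=b$, so $\lambda(x)=\alpha$ holds precisely when the digit $1$ occurs in $(x_i)$ with frequency $p=p(\alpha):=\frac{\log b-\alpha}{\log b-\log a}$; in other words $J_\alpha$ is a digit-frequency set. The first step is therefore to record, via the classical Besicovitch--Eggleston theorem (equivalently the multifractal formalism of Pesin and Weiss recalled in the Introduction, see \cite{pw1,w1}), that
$$L(\alpha)=\frac{H(p(\alpha))}{\alpha},\qquad H(p):=-p\log p-(1-p)\log(1-p),\quad \alpha\in[\log a,\log b].$$
Since $\alpha\mapsto p(\alpha)$ is an affine, strictly decreasing bijection of $[\log a,\log b]$ onto $[0,1]$ and $L$ is real analytic, $L$ is concave if and only if $L''\le 0$; writing $u=\log a$, $v=\log b$, $w=v-u$ and $\alpha(p)=pu+(1-p)v$, a direct differentiation of $H(p(\alpha))/\alpha$ shows that at the point $\alpha(p)$ the sign of $L''$ equals the sign of
$$\Phi(p):=\alpha(p)^2H''(p)+2w\,\alpha(p)H'(p)+2w^2H(p)=-\frac{\alpha(p)^2}{p(1-p)}+2w\,\alpha(p)\log\frac{1-p}{p}+2w^2H(p).$$

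The heart of the argument is a cancellation: since $\alpha'(p)=-w$, differentiating $\Phi$ term by term makes everything telescope, leaving
$$\Phi'(p)=\alpha(p)^2H'''(p)=\frac{\alpha(p)^2(1-2p)}{p^2(1-p)^2},$$
so $\Phi$ is strictly increasing on $(0,\tfrac{1}{2})$ and strictly decreasing on $(\tfrac{1}{2},1)$. Hence $\sup_{(0,1)}\Phi=\Phi(\tfrac{1}{2})$, and $L$ is concave if and only if $\Phi(\tfrac{1}{2})\le 0$. Finally $\alpha(\tfrac{1}{2})=\tfrac{1}{2}(u+v)$ and $H(\tfrac{1}{2})=\log 2$, whence $\Phi(\tfrac{1}{2})=-(u+v)^2+2\log 2\,(v-u)^2$; since $2\log 2>1$, the inequality $\Phi(\tfrac{1}{2})\le 0$ rearranges to $\frac{u+v}{v-u}\ge\sqrt{2\log 2}$, i.e. to $\frac{v}{u}\le\frac{\sqrt{2\log 2}+1}{\sqrt{2\log 2}-1}$, which upon substituting $u=\log a$, $v=\log b$ is exactly the stated condition. (One first handles the degenerate case $a=b$ and, by relabelling the two branches, reduces to $a<b$, so that $w>0$ and $[\log a,\log b]$ is a genuine interval.)

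I expect the real work to sit in two places. The computational point is the identity $\Phi'=\alpha^2H'''$: it is routine to verify term by term, but it is precisely what makes both directions of the "if and only if" come out symmetrically and the constant $\sqrt{2\log 2}$ emerge with no further case analysis, so it deserves to be isolated. The more delicate, conceptual point is the first step: $L$ must be used in its closed form on the whole interval, and one has to check that convergence of the Birkhoff average of $\log|T'|$ along an orbit is genuinely equivalent to convergence of the frequency of the digit $1$ --- true here only because $\log|T'|$ takes just two values --- so that $J_\alpha$ really is the Besicovitch--Eggleston set whose Hausdorff dimension is known. Neither step is deep; the place an error is most likely to slip in is the sign bookkeeping when passing from $L(\alpha)$ to $\Phi(p)$.
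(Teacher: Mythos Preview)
Your argument is correct and in fact cleaner than the paper's. Both proofs start from the same explicit formula $L(\alpha)=H(p(\alpha))/\alpha$ (the paper derives it in Lemma~\ref{2BranchFormula}), and both ultimately study the sign of $h''-2L'$, since $\alpha L''=h''-2L'$; but the executions diverge from there. The paper treats the graphs of $2L'$ and $h''$ separately: Proposition~\ref{segunda} shows $h''$ is concave with its maximum at $\alpha_M=(\log a+\log b)/2$, Lemma~\ref{tran} proves that zeros of $L''$ away from $\alpha_M$ are transversal, and then a deformation argument (letting $b\searrow a$) exhibits a parameter regime with no inflection points, so that the bifurcation must occur exactly when $2L'(\alpha_M)=h''(\alpha_M)$. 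You instead package $h''-2L'$ (times $w^2\alpha^2$) as $\Phi(p)=\alpha^2H''+2w\alpha H'+2w^2H$ and observe the telescoping identity $\Phi'=\alpha^2H'''$, which makes $\Phi$ unimodal with its peak at $p=\tfrac12$ (i.e.\ at $\alpha_M$); concavity is then equivalent to the single inequality $\Phi(\tfrac12)\le 0$, and both implications drop out at once with no transversality or continuation argument. The paper's route has the by-product of isolating the transversality lemma, which may be of independent interest, but your approach is shorter, entirely elementary, and makes the emergence of the constant $\sqrt{2\log2}$ transparent.
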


The above relation gives  the bifurcation
point dividing the spectra with inflection points form the concave
ones. For maps with two linear branches, the combination of Lemma~\ref{tran} with this Theorem implies that the
bifurcation between concave spectra and non-concave one may only occur when
the Lyapunov exponent, $\alpha_M$, corresponding to the measure of
maximal entropy is an inflection point.

In order to describe the Lyapunov spectrum we will make use of the thermodynamic formalism. Let $T$ be a cookie-cutter map, denote by $ \mathcal{M}_T$ the set of          
 $T-$invariant probability measures.  The {\sf topological pressure} of $-t \log |T'|$ with respect to $T$ is defined by
\begin{equation*}
P(-t \log |T'|) = \sup \left\{ h(\mu) -t \int \log |T'| \ d\mu :  \mu \in \mathcal{M}_T   \right\},
\end{equation*}
where $ h(\mu)$ denotes the measure theoretic entropy of $T$ with respect to the measure $\mu$ (see \cite[Chapter 4]{wa} for a precise definition of entropy). A measure $\mu_{t} \in  \mathcal{M}_T$ is called an {\sf equilibrium measure} for $-t \log |T'|$ if it satisfies:
\[ P(-t \log |T'|) = h(\mu_{t}) - t \int \log |T'| \ d\mu_{t}. \]
If the function $\log |T'|$ is not cohomologous to a constant then the function $t \mapsto P(-t \log |T'|)$ is strictly convex, strictly decreasing, real analytic and for every $t \in \mathbb{R}$ there exists a unique equilibrium measure $\mu_t$ corresponding to $-t \log |T'|$ (see \cite[Chapters 3 and 4]{pp}). Moreover, there are explicit expressions for the derivatives of the pressure. Indeed (see \cite[Chapter $4$]{pu}), the first derivative of the pressure is given by
\[\alpha(t_0):= -\frac{d}{dt} P(-t \log |T'|) \Big|_{t=t_0} =  \int \log |T'| \ d \mu_{t_0}.\]
The second derivative of the pressure is the {\sf asymptotic variance}
\[\frac{d^2}{dt^2} P(-t \log |T'|) \Big|_{t=t_0} = \sigma^2(t_0),\]
where
\[\sigma^2(t_0):=
\lim_{n \to \infty} \int \left( \sum_{i=0}^{n-1} - \log |T'(T^i x)| +n
\int \log |T'| d \mu_{t_0} \right) ^2 d\mu_{t_0}(x). \] 
There exists a
close relation between the topological pressure and the Hausdorff
dimension of the repeller. In fact, the number $t_d = \dim_H(\Lambda)$
is the unique zero of the Bowen equation (see \cite[Chapter $7$]{pe})
\[P(-t \log |T'|) =0.\] 
Let $\mu_{t_d}$ be the unique equilibrium measure corresponding to
the function $-t_d \log |T'|$ and, let
\[\alpha_d := \int \log |T'| \ d\mu_d.\]
Our next theorem establishes general conditions for a cookie-cutter map to have (non-)concave Lyapunov spectrum.
\begin{introtheorem}
  \label{B}
  Let $L$ be  Lyapunov spectrum of  a cookie-cutter map $T$.  Then $L$ is always concave in $[\alpha_{min} , \alpha_d]$. 
Moreover, $L: [\alpha_{min}, \alpha_{max}] \to \mathbb{R}$ is concave if and only if
$${\sigma^2(t)} <   \dfrac{\alpha(t)^2} {2 P(-t \log |T'|)} \quad \text{ for all } \quad t < t_d \cdot$$ 
\end{introtheorem}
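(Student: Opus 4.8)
The plan is to express the Lyapunov spectrum $L$ parametrically in terms of the pressure function via the standard multifractal formalism, and then translate the concavity of $L$ into an inequality involving the derivatives of the pressure. Write $P(t) = P(-t\log|T'|)$, so that $P$ is real analytic, strictly decreasing, strictly convex, with $P'(t) = -\alpha(t)$ and $P''(t) = \sigma^2(t)$. For each $\alpha$ in the interior of $[\alpha_{min},\alpha_{max}]$ there is a unique $t = t(\alpha)$ with $\alpha(t) = \alpha$, and the classical conditional variational principle (which follows from the thermodynamic results cited in the excerpt) gives
\[
L(\alpha) = \dim_H(J_\alpha) = \frac{h(\mu_{t(\alpha)})}{\alpha} = \frac{P(t(\alpha)) + t(\alpha)\,\alpha(t(\alpha))}{\alpha} = t(\alpha) + \frac{P(t(\alpha))}{\alpha}.
\]
So the first step is to record this formula carefully and to note the endpoint behavior, namely $L(\alpha_d) = t_d$ (since $P(t_d)=0$) and that $\alpha \mapsto t(\alpha)$ is a decreasing real-analytic diffeomorphism with $t(\alpha_d) = t_d$; in particular $t < t_d$ corresponds exactly to $\alpha > \alpha_d$.

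The second step is to compute $L''(\alpha)$. It is cleaner to change variables and differentiate with respect to $t$. Using $\frac{d\alpha}{dt} = \alpha'(t) = -P''(t) = -\sigma^2(t) < 0$, one computes $L$ as a function of $t$ and then, via the chain rule, obtains an expression for $L''(\alpha)$ whose sign is governed by a single quantity. The routine (but central) calculation will show that, up to a positive factor,
\[
L''(\alpha) \ \text{has the sign of} \ \bigl(\alpha(t)^2 - 2\,\sigma^2(t)\,P(t)\bigr),
\]
i.e. $L''(\alpha) \le 0$ iff $\sigma^2(t) \le \dfrac{\alpha(t)^2}{2P(t)}$ whenever $P(t) > 0$, that is for $t < t_d$. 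Here I would be careful about the sign of $P(t)$: for $t < t_d$ we have $P(t) > 0$, so dividing by $P(t)$ preserves the inequality direction; for $t > t_d$ we have $P(t) < 0$ and the corresponding quantity $\alpha(t)^2 - 2\sigma^2(t)P(t)$ is automatically positive (a sum of a square and a positive term), which is exactly why $L$ is automatically concave on $[\alpha_{min},\alpha_d]$ — this yields the first assertion of the theorem with essentially no extra work. The value $t = t_d$ is the boundary case where the expression reduces to $\alpha_d^2 > 0$, consistent with concavity there.

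The third step is bookkeeping: assemble the pointwise sign computation into the global statement. Concavity of $L$ on all of $[\alpha_{min},\alpha_{max}]$ is equivalent to $L''(\alpha) \le 0$ for all interior $\alpha$ (plus a remark that if the inequality $\sigma^2(t) < \alpha(t)^2/(2P(t))$ holds strictly for all $t < t_d$ then one gets strict concavity on the relevant piece, and that on $[\alpha_{min},\alpha_d]$ concavity is unconditional as just noted), and by the change of variables this is equivalent to the stated inequality for all $t < t_d$. One should also address the non-degenerate case hypothesis: if $\log|T'|$ is cohomologous to a constant the spectrum is a single point and the statement is vacuous, so we may assume $\sigma^2(t) > 0$ throughout, which is what makes $\alpha \mapsto t(\alpha)$ a genuine diffeomorphism and justifies all the chain-rule manipulations.

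The main obstacle I anticipate is not any single hard estimate but rather getting the multifractal formula $L(\alpha) = t(\alpha) + P(t(\alpha))/\alpha$ on firm footing and handling the boundary/endpoint subtleties cleanly — in particular verifying that the formalism applies up to and including $\alpha_d$, and correctly tracking the sign of $P(t)$ when rearranging the variance inequality. The interior derivative computation itself is a direct two-fold differentiation; the care lies in the domain of validity (cookie-cutters are $C^{1+\epsilon}$ with a full symbolic coding, so the thermodynamic formalism of \cite{pp} and the dimension formula apply) and in confirming that strict versus non-strict concavity match up with strict versus non-strict versions of the inequality.
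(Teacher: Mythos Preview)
Your approach is essentially identical to the paper's: both parametrize by $t$, differentiate $L(\alpha(t)) = t + P(t)/\alpha(t)$ twice via the chain rule, and reduce the sign of $L''(\alpha)$ to that of $\alpha(t)^2 - 2\sigma^2(t)P(t)$, with automatic concavity on $[\alpha_{\min},\alpha_d]$ coming from $P(t)\le 0$ for $t\ge t_d$. One caution: the phrase ``$L''(\alpha)$ has the sign of $\alpha(t)^2 - 2\sigma^2(t)P(t)$'' is backwards --- since $\alpha'(t) = -\sigma^2(t) < 0$ the overall chain-rule factor is negative, so in fact $L''(\alpha)\le 0 \iff \alpha(t)^2 - 2\sigma^2(t)P(t)\ge 0$, which is precisely what you use in the next clause and in your $t>t_d$ argument; just correct that one sentence and the write-up is consistent.
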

When considering linear cookie-cutter maps we obtain a simpler formula in terms of the slopes of the map.

\begin{introcorollary}
  \label{C}
  Consider a linear cookie-cutter map $T$ with  $n$-branches of slopes $m_1, \dots, m_n$.
  Then its  Lypunov spectrum $L: [\min\{\log|m_i|\}, \max\{\log|m_i|\}] \to \mathbb{R}$ 
is concave if and only if, for all $t \in \mathbb{R}$, 
\begin{equation*}
2 \log \left( \sum_{i=1}^n |m_i|^t \right) \left(\frac{\left( \sum_{i=1}^n |m_i|^t (\log |m_i|)^2  \right) \left( \sum_{i=1}^n |m_i|^t   \right)}{\left( \sum_{i=1}^n |m_i|^t \log |m_i|  \right)^2}  - 1 \right) \leq 1.
\end{equation*}
\end{introcorollary}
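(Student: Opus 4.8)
The plan is to obtain Corollary~\ref{C} by feeding into the criterion of Theorem~\ref{B} the closed forms that the pressure, $\alpha(t)$ and $\sigma^2(t)$ take in the linear case. First I would record that for a linear cookie-cutter with $n$ full branches the natural coding conjugates $(\Lambda,T)$ to the full shift on $n$ symbols and carries $\log|T'|$ to the potential that is constant equal to $\log|m_i|$ on the $i$-th one-cylinder. For a potential on the full $n$-shift depending only on the first coordinate the topological pressure is just the logarithm of the sum of the exponentials of its values over the symbols, so
\[
P(-t\log|T'|)=\log\Big(\sum_{i=1}^n|m_i|^{-t}\Big).
\]
Writing $Z(t):=\sum_i|m_i|^{-t}$, differentiation gives $\alpha(t)=-Z'(t)/Z(t)$ and $\sigma^2(t)=(Z''Z-(Z')^2)/Z^2$, with $Z'(t)=-\sum_i|m_i|^{-t}\log|m_i|$ and $Z''(t)=\sum_i|m_i|^{-t}(\log|m_i|)^2$. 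Along the way the same Birkhoff-average description of $\lambda$ shows $\alpha_{min}=\min_i\log|m_i|$ and $\alpha_{max}=\max_i\log|m_i|$, matching the domain in the statement.

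The second step is a routine algebraic rearrangement. Substituting into the inequality $\sigma^2(t)<\alpha(t)^2/\bigl(2P(-t\log|T'|)\bigr)$ of Theorem~\ref{B} and multiplying by $Z(t)^2>0$ turns it into $Z''Z-(Z')^2<(Z')^2/(2\log Z)$. For $t<t_d$ one has $\log Z(t)=P(-t\log|T'|)>0$, since the pressure is strictly decreasing and vanishes at $t_d$, so I may multiply by $2\log Z(t)$ and then divide by $(Z')^2>0$ (here $(Z')^2>0$ uses $|m_i|>1$, hence $\log|m_i|>0$) to reach
\[
2\log\Big(\sum_{i}|m_i|^{-t}\Big)\left(\frac{Z''(t)Z(t)}{Z'(t)^2}-1\right)<1 .
\]
Writing out $Z,Z',Z''$ and performing the harmless change of variable $t\mapsto-t$, which only replaces $|m_i|^{-t}$ by $|m_i|^{t}$, produces the displayed inequality of the corollary.

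It remains to reconcile the quantifiers. Theorem~\ref{B} ranges over $t<t_d$, whereas the corollary ranges over all $t\in\mathbb{R}$; but for $t\ge t_d$ one has $P(-t\log|T'|)\le0$ while the bracket $Z''Z/(Z')^2-1$ is always $\ge0$ by Cauchy--Schwarz, so the left-hand side is automatically $\le0<1$ there and extending the range costs nothing. The bracket vanishes exactly when all $|m_i|$ coincide, i.e. in the degenerate case where $L$ reduces to a point and the inequality reads $0\le1$; this, together with the boundary situation in which the supremum of the left-hand side equals $1$, is why the clean statement carries ``$\le$'' rather than ``$<$'', the boundary case being absorbed by the same continuity/inflection-point analysis used for Theorem~\ref{B}.

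I do not anticipate a real obstacle: once Theorem~\ref{B} is available the argument is bookkeeping on top of it. The two points deserving genuine care are justifying the closed form for $P(-t\log|T'|)$ from the linearity of $T$ (i.e. that the coding is a conjugacy to the full shift and $\log|T'|$ is locally constant of depth one), and the passage from ``for all $t<t_d$'' to ``for all $t\in\mathbb{R}$'' together with the correct strict/non-strict convention at the boundary.
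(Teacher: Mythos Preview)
Your approach is essentially the paper's: compute the closed forms of $P$, $\alpha$, $\sigma^2$ in the linear case and substitute into the general concavity criterion. The one difference worth noting is that the paper does not invoke Theorem~\ref{B} as stated but rather the equivalence established inside its proof,
\[
\frac{d^2L}{d\alpha^2}(\alpha(t))\le 0 \iff -2\sigma^2(t)\,P(-t\log|T'|)+\alpha(t)^2\ge 0,
\]
which already holds for \emph{all} $t\in\mathbb{R}$ and with the non-strict sign. Using this directly makes both of your ``points deserving genuine care'' disappear: there is no need to extend the range from $t<t_d$ to all $t$ via Cauchy--Schwarz, and no need to argue about the passage from $<$ to $\le$ at the boundary. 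Your patches for these issues are correct (the Cauchy--Schwarz argument is fine; the $\le$ discussion is a little vague but the conclusion is right), so nothing is wrong---it is just slightly cleaner to quote the displayed equivalence rather than the theorem. The sign convention $|m_i|^{-t}$ versus $|m_i|^{t}$ is, as you note, immaterial once one quantifies over all $t$.
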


In terms of the $L^1(\mu_t)$ and $L^2(\mu_t)$ norms with respect to the corresponding equilibrium measure $\mu_t$ the above formula
may be rewritten as:
$$
2 P(-t \log |T'|) \left( \dfrac{ \| \log|T'| \|^2_{2,t}}{ \| \log|T'| \|^2_{1,t} } -1  \right)  \le 1.$$

\medskip
Although our results shed some light on the concavity properties of the Lyapunov spectrum, up to our knowledge, the occurrence of inflection points is
 not well understood.
In fact, given a map with Lyapunov spectrum having an inflection point at $\alpha_0$, 
it is natural to pose the general problem of understanding the geometric and ergodic properties of the equilibrium measure
with exponent $\alpha_0$.

From our results it follows  that the Lyapunov spectrum of a cookie-cutter map has an even (possibly zero) number of inflections points. 
Although Theorem~\ref{A} establishes the existence of maps with spectra having at least two inflection points, it does not give 
an exact count. We conjecture that the spectrum  of a cookie-cutter map with two branches has at most two inflection points.
Also one may ask: Is there an upper bound on the number of inflection points of the spectrum of a cookie-cutter map? of a compact conformal repeller? of a non-compact conformal repeller?

\bigskip
Our results are based on the following formula that ties up the topological pressure with the Lyapunov spectrum 
\begin{equation}
\label{WeissFormula}
L(\alpha) = \frac{1}{\alpha} \inf_{t \in \mathbb{R}} (P(-t \log |T'|) +t \alpha).
\end{equation}
This formula follows form the work of Weiss \cite{w1} and can be found explicitly, for instance, in the work of Kesseb\"ohmer and Stratmann \cite{ks}. Actually, in this setting, the Lyapunov spectrum can be written as
\begin{equation} 
\label{lyapunov}
L(\alpha)=  \frac{1}{\alpha} (P(-t_{\alpha} \log |T'|) +t_{\alpha} \alpha) = \frac{h(\mu_{\alpha})}{\alpha},
\end{equation}
where $t_{\alpha}$ is the unique real number such that 
\[ -\dfrac{d}{d t} P(-t \log |T'|) \Big|_{t=t_{\alpha}} = \int \log |T'| \ d\mu_{\alpha} = \alpha, \]
and $\mu_{\alpha}$ is the unique equilibrium measure corresponding to the potential  $-t_{\alpha} \log |T'|$. 
That is, $\alpha \mapsto t_\alpha$ is the inverse of $t \mapsto \alpha(t)$.
Thus, after the substitution $\alpha = \alpha(t)$, equation~\eqref{WeissFormula} becomes:
\begin{equation} 
\label{lt}
L(\alpha(t))=  \frac{1}{\alpha(t)} \left( P(-t  \log |T'|)  +t \alpha(t) \right).  
\end{equation}

The structure of the paper is as follows. In Section~\ref{2} we prove Theorem~\ref{A} and construct explicit examples of maps for which the Lyapunov spectrum is not concave. In Section~\ref{nl} we prove Theorem~\ref{B} and finally is Section~\ref{lc} we prove Corollary~\ref{C}.

\section{Maps with two branches} \label{2}
Our aim now is to prove Theorem~\ref{A}.
Throughout this section we let $T$ be the cookie-cutter with two linear branches of slopes $b>a>1$ and Lyapunov spectrum $L$,
as in the statement of Theorem~\ref{A}.
The proof relies on explicit formulas for $L$ together with a characterization of the inflection points that persist  under small changes
of the slopes $a$ and $b$ (i.e. transversal). 
We show that unstable zeros of $\frac{d^2 L}{d \alpha^2} (\alpha)$ may only occur at the Lyapunov exponent of the measure of maximal entropy when the logarithmic ratio of
the slopes is as in the statement of the theorem. 

We start by obtaining an explicit formula for $L$, from equation~\eqref{WeissFormula}.
\begin{lema} \label{2BranchFormula}
  \begin{equation*} 
L(\alpha) = 
 \frac{1}{\alpha} \left[- \left( \frac{\log b - \alpha}{\log \frac{b}{a}} \right) \log  \left( \frac{\log b - \alpha}{\log \frac{b}{a}} \right) -  \left( \frac{\alpha - \log a }{\log \frac{b}{a}} \right) \log  \left( \frac{\alpha - \log a}{\log \frac{b}{a}} \right) \right].
\end{equation*}
\end{lema}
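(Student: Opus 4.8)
The plan is to start from Weiss's variational formula~\eqref{WeissFormula} and evaluate every ingredient explicitly, exploiting that $T$ is linear with two full branches. First I would note that $T$ is topologically conjugate to the full shift on two symbols and that $-t\log|T'|$ is locally constant, taking the value $-t\log a$ on $[0,1/a]\cap\Lambda$ and $-t\log b$ on $[1-1/b,1]\cap\Lambda$. Hence the associated transfer operator is a $2\times 2$ positive matrix and the topological pressure is
\[
P(-t\log|T'|)=\log\bigl(a^{-t}+b^{-t}\bigr).
\]
One checks along the way that $\alpha_{min}=\log a$ and $\alpha_{max}=\log b$, so the domain of $L$ is indeed $[\log a,\log b]$.

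Next I would perform the minimization over $t$ in~\eqref{WeissFormula}. Since $a\ne b$, the potential $\log|T'|$ is not cohomologous to a constant, so $t\mapsto P(-t\log|T'|)+t\alpha$ is strictly convex and, for $\alpha$ in the open interval $(\log a,\log b)$, its infimum is attained at the unique critical point $t_\alpha$ (this is also the $t_\alpha$ of~\eqref{lyapunov}). Differentiating and setting the result to zero gives
\[
\alpha=\frac{a^{-t_\alpha}\log a+b^{-t_\alpha}\log b}{a^{-t_\alpha}+b^{-t_\alpha}}.
\]
Writing $p:=a^{-t_\alpha}/(a^{-t_\alpha}+b^{-t_\alpha})$ and $1-p:=b^{-t_\alpha}/(a^{-t_\alpha}+b^{-t_\alpha})$, this says $\alpha=p\log a+(1-p)\log b$, equivalently $p=(\log b-\alpha)/\log\tfrac ba$ and $1-p=(\alpha-\log a)/\log\tfrac ba$.

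The last step is to evaluate $P(-t_\alpha\log|T'|)+t_\alpha\alpha$ in terms of $p$. From $a^{-t_\alpha}=p\,(a^{-t_\alpha}+b^{-t_\alpha})$ one obtains $\log(a^{-t_\alpha}+b^{-t_\alpha})=-t_\alpha\log a-\log p$, and similarly $\log(a^{-t_\alpha}+b^{-t_\alpha})=-t_\alpha\log b-\log(1-p)$; taking the convex combination of these two identities with weights $p$ and $1-p$ collapses the left-hand side and yields
\[
\log\bigl(a^{-t_\alpha}+b^{-t_\alpha}\bigr)=-t_\alpha\alpha-p\log p-(1-p)\log(1-p).
\]
Adding $t_\alpha\alpha$ and multiplying by $1/\alpha$ produces exactly the asserted formula for $L(\alpha)$; note that the bracketed quantity is the entropy of the Bernoulli $(p,1-p)$ measure, consistent with $L(\alpha)=h(\mu_\alpha)/\alpha$ from~\eqref{lyapunov}. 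Finally, since $x\log x\to 0$ as $x\to 0^+$, both sides extend continuously to the endpoints $\alpha=\log a$ and $\alpha=\log b$ (where $|t_\alpha|\to\infty$), so the formula holds on the whole closed interval.

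This is essentially a direct computation, so I do not expect a serious obstacle; the only points needing care are the identification of the locally constant pressure formula (i.e.\ checking that the thermodynamic formalism and Weiss's formula recalled in the introduction apply to this elementary map), the justification that the infimum in~\eqref{WeissFormula} is attained in the interior, and the continuous passage to the boundary.
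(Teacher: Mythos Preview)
Your argument is correct, but it proceeds along a different (slightly more computational) path than the paper. The paper starts from the identity $L(\alpha)=h(\mu_\alpha)/\alpha$ of~\eqref{lyapunov}, quotes the classical fact that equilibrium states for locally constant potentials on a full two-shift are Bernoulli, and then solves for the Bernoulli weights $(p,1-p)$ directly from the single linear constraint $\alpha=p\log a+(1-p)\log b$; the entropy of that Bernoulli measure then gives the formula. You instead compute the pressure $P(-t\log|T'|)=\log(a^{-t}+b^{-t})$ explicitly and perform the Legendre-type minimization in~\eqref{WeissFormula} by hand, recovering the same $(p,1-p)$ and the same entropy expression without ever invoking that $\mu_\alpha$ is Bernoulli (you note this only as a consistency check at the end). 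Your route is more self-contained---it needs only the explicit pressure formula and calculus---while the paper's is shorter and more conceptual once one accepts the Bernoulli structure of the equilibrium measure. Your additional care about the endpoints $\alpha=\log a,\log b$ is a nice touch that the paper leaves implicit.
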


\begin{proof}
 The equilibrium  measure $\mu_{\alpha}$, in equation~\eqref{lyapunov}, can be explicitly determined. This is due to the fact that it is a Bernoulli measure (see \cite[Theorem 9.16]{wa}). Since the Lyapunov exponent corresponding to $\mu_{\alpha}$ is $\alpha$ we have that
\begin{eqnarray*}
\alpha &=&  \int \log |T'| \ d\mu_{\alpha} = \mu_{\alpha}(I_1) \log a  + \mu_{\alpha}(I_2) \log b 
\\     &=&    \mu_{\alpha}(I_1) \log a +(1- \mu_{\alpha}(I_1)) \log b \\
       &=&  \mu_{\alpha}(I_1) \left( \log a - \log b \right) + \log b.
\end{eqnarray*}
Hence,
\begin{eqnarray*}
\mu_{\alpha}(I_1)& =& \frac{\log b - \alpha}{\log b - \log a}, \\
\mu_{\alpha}(I_2) & = & \frac{\alpha - \log a}{\log b - \log a}.  
\end{eqnarray*}
Therefore, 
$\mu_{\alpha}$ is the unique Bernoulli measure which satisfies the above conditions.  Moreover, the entropy of this measure is
\begin{equation} \label{entropy}
h(\mu_{\alpha})=-\left( \frac{\log b - \alpha}{\log b - \log a} \right) \log  \left( \frac{ \log b - \alpha}{\log b - \log a} \right) -  \left( \frac{\alpha - \log a}{\log b - \log a} \right) \log  \left( \frac{\alpha - \log a}{\log b - \log a} \right). 
\end{equation}
Hence, from equation \eqref{lyapunov} we obtain that
\begin{equation*} \label{dosramas}
L(\alpha) = 
- \frac{1}{\alpha \log \frac{b}{a}} \left[ \left({\log b - \alpha} \right) \log  \left( \frac{\log b - \alpha}{\log b - \log a} \right) +  \left({\alpha - \log a } \right) \log  \left( \frac{\alpha - \log a}{\log b - \log a} \right) \right].
\end{equation*}
\end{proof}

As suggested by equation~\eqref{lyapunov} the behavior of the entropy function is closely related to the (in)existence of inflection points of the Lyapunov spectrum. In fact:
\begin{lema}[Remark 8.1 \cite{io}] \label{et}
A point $\alpha_0 \in (\log a , \log b)$ satisfies $ \frac{d^2L}{d \alpha^2} (\alpha_0)=0$ if and only if
\[ 2 \frac{d L}{d \alpha} (\alpha_0) =  \frac{d^2}{d \alpha^2} h(\mu_{\alpha}) \Big{|}_{\alpha=\alpha_0}. \]
\end{lema}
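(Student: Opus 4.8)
The plan is to prove the stated equivalence by a direct computation, starting from the relation $L(\alpha) = h(\mu_\alpha)/\alpha$ given in equation~\eqref{lyapunov}. Writing $h(\alpha) := h(\mu_\alpha)$ for brevity, we have $L(\alpha) = h(\alpha)/\alpha$ on the open interval $(\log a, \log b)$, where $h$ is smooth (indeed real-analytic, since the equilibrium measures vary analytically and, in the two-branch case, $h$ is given explicitly by the formula \eqref{entropy}). The entire statement is then a calculus identity relating the first and second derivatives of the quotient $h/\alpha$ to those of $h$ itself.

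First I would differentiate $L = h/\alpha$ twice. The first derivative is
\begin{equation*}
\frac{dL}{d\alpha} = \frac{\alpha h'(\alpha) - h(\alpha)}{\alpha^2}.
\end{equation*}
Differentiating once more and simplifying yields
\begin{equation*}
\frac{d^2 L}{d\alpha^2} = \frac{\alpha^2 h''(\alpha) - 2\alpha h'(\alpha) + 2 h(\alpha)}{\alpha^3} = \frac{1}{\alpha}\left( h''(\alpha) - \frac{2}{\alpha^2}\bigl(\alpha h'(\alpha) - h(\alpha)\bigr)\right).
\end{equation*}

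The key observation is that the bracketed quantity $\alpha h'(\alpha) - h(\alpha)$ is precisely $\alpha^2 \, \frac{dL}{d\alpha}$, by the expression for $L'$ above. Substituting this into the formula for $L''$ gives the clean identity
\begin{equation*}
\frac{d^2 L}{d\alpha^2}(\alpha) = \frac{1}{\alpha}\left( \frac{d^2 h}{d\alpha^2}(\alpha) - 2 \frac{dL}{d\alpha}(\alpha)\right).
\end{equation*}
Since $\alpha \in (\log a, \log b)$ is strictly positive (as $a > 1$), the factor $1/\alpha$ never vanishes, so $\frac{d^2 L}{d\alpha^2}(\alpha_0) = 0$ holds if and only if $\frac{d^2 h}{d\alpha^2}(\alpha_0) = 2\frac{dL}{d\alpha}(\alpha_0)$, which is the claimed equivalence.

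I expect no serious obstacle here: the result is essentially an algebraic rearrangement, and the only point requiring care is the bookkeeping in passing from $L''$ to the form in which the factor $\alpha^2 L'$ can be recognized and cancelled against $\alpha h' - h$. The positivity of $\alpha$ on the relevant interval, which guarantees that dividing by $\alpha$ does not introduce or remove zeros, should be stated explicitly to justify the ``if and only if''.
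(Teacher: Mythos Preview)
Your argument is correct: the identity $L''(\alpha) = \frac{1}{\alpha}\bigl(h''(\alpha) - 2L'(\alpha)\bigr)$, obtained by twice differentiating $L(\alpha) = h(\mu_\alpha)/\alpha$ from equation~\eqref{lyapunov}, together with $\alpha > 0$, immediately gives the equivalence. The paper does not supply its own proof of this lemma (it is quoted from \cite{io}), so there is nothing further to compare; your direct computation is precisely the intended argument.
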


Thus, our aim now is to study the second derivative of the entropy:

\begin{prop} \label{segunda}
Let $\alpha_M= (\log a+ \log b)/2$. Then
the function $$\frac{d^2}{d \alpha^2} h(\mu_{\alpha}) :(\log a , \log b) \to \mathbb{R}$$ 
is concave, increasing in the interval $(\log a ,\alpha_M)$ and, decreasing
in the interval $(\alpha_M , \log b)$. In particular, it has a unique maximum at 
$\alpha = \alpha_M$. Moreover,
\begin{eqnarray*}
\frac{d^2}{d \alpha^2} h(\mu_{\alpha}) \Big|_{\alpha = \alpha_M} & = & 
- \Big(\frac{2}{\log \frac{b}{a}} \Big)^2, \\
\lim_{ \alpha \to \log a}\frac{d^2}{d \alpha^2} h(\mu_{\alpha}) &=& - \infty,\\
\lim_{ \alpha \to \log b}  \frac{d^2}{d \alpha^2} h(\mu_{\alpha}) & = &- \infty.
\end{eqnarray*}
\end{prop}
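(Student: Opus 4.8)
The plan is to reduce the whole statement to an elementary one-variable calculus exercise via an affine change of variables that turns the entropy \eqref{entropy} into the standard binary entropy function. Set $D := \log b - \log a = \log(b/a) > 0$ and introduce the new variable $s := (\alpha - \log a)/D$, which increases from $0$ to $1$ as $\alpha$ runs over $(\log a, \log b)$, with $\alpha = \log a + sD$. Since $\log b - \alpha = D(1-s)$, formula \eqref{entropy} becomes $h(\mu_\alpha) = H(s)$, where $H(s) = -s\log s - (1-s)\log(1-s)$. As $\alpha$ is an increasing affine function of $s$ (so $d\alpha/ds = D$ is a positive constant), we get $\frac{d^2}{d\alpha^2} h(\mu_\alpha) = \frac{1}{D^2} H''(s)$, and a direct computation gives $H'(s) = \log\frac{1-s}{s}$ and $H''(s) = -\frac{1}{s(1-s)}$. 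Hence the function under study is, up to the affine reparametrization $\alpha \leftrightarrow s$ and the positive constant $D^{-2}$, just $g(s) := -\frac{1}{s(1-s)}$ on $(0,1)$.

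Next I would analyze $g$ directly. Because $\alpha \mapsto s$ is affine and increasing, monotonicity intervals, concavity, and the location of the extremum are preserved when passing between the variables, so it suffices to prove the corresponding facts for $g$. Using the partial fraction decomposition $\frac{1}{s(1-s)} = \frac{1}{s} + \frac{1}{1-s}$, one computes $g'(s) = \frac{1}{s^2} - \frac{1}{(1-s)^2} = \frac{1-2s}{s^2(1-s)^2}$ and $g''(s) = -\frac{2}{s^3} - \frac{2}{(1-s)^3} < 0$ on $(0,1)$. The sign of $g''$ gives that $g$, and therefore $\frac{d^2}{d\alpha^2}h(\mu_\alpha)$ as a function of $\alpha$, is concave; the factorization of $g'$ shows $g' > 0$ on $(0,\tfrac12)$ and $g' < 0$ on $(\tfrac12,1)$, so $g$ is increasing on $(0,\tfrac12)$ and decreasing on $(\tfrac12,1)$, with a unique maximum at $s = \tfrac12$, which corresponds to $\alpha = \log a + D/2 = \alpha_M$.

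Finally I would read off the three displayed identities. At $s = \tfrac12$ one has $g(\tfrac12) = -4$, so $\frac{d^2}{d\alpha^2}h(\mu_\alpha)\big|_{\alpha=\alpha_M} = \frac{1}{D^2}(-4) = -\big(\tfrac{2}{\log(b/a)}\big)^2$. As $\alpha \to \log a$ we have $s \to 0^+$, and as $\alpha \to \log b$ we have $s \to 1^-$; in either case $s(1-s)\to 0^+$, so $g(s) \to -\infty$ and hence $\frac{d^2}{d\alpha^2}h(\mu_\alpha) \to -\infty$ at both endpoints.

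There is no real obstacle here: once the binary-entropy substitution is in place, everything is routine single-variable calculus. The only point deserving a sentence of care is the observation that $\alpha$ is an increasing affine function of $s$, so that the qualitative conclusions about $g$ (concavity, monotonicity intervals, the unique maximum) transfer verbatim to $\frac{d^2}{d\alpha^2}h(\mu_\alpha)$ viewed as a function of $\alpha$, and only the endpoints of the intervals get relabeled via $s=\tfrac12 \leftrightarrow \alpha=\alpha_M$, $s=0\leftrightarrow\alpha=\log a$, $s=1\leftrightarrow\alpha=\log b$.
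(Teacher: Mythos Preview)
Your proof is correct and follows essentially the same strategy as the paper: both arguments compute the entropy's derivatives up through fourth order and read off concavity, monotonicity, the location of the maximum, and the endpoint limits directly. Your affine substitution $s=(\alpha-\log a)/D$, which reduces $h(\mu_\alpha)$ to the binary entropy $H(s)$, is a clean repackaging that streamlines the algebra, but the analysis of $g'$ and $g''$ is exactly the paper's computation of the third and fourth derivatives of $h(\mu_\alpha)$ in the original variable.
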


\begin{proof}
From equation \eqref{entropy} we have that the first derivative of the entropy with respect to $\alpha$ is given by
\begin{equation} \label{1-derivative}
\frac{d}{d \alpha} h(\mu_{\alpha})=\frac{1}{\log \frac{b}{a}} \left(  \log  \left( \frac{\alpha - \log a}{\log {b} - \log {a}} \right)    - \log  \left( \frac{\log b - \alpha}{\log {b}- \log{a}} \right) \right). 
\end{equation}
We can also compute its second derivative,
\begin{eqnarray} \label{2-derivative}
\frac{d^2}{d \alpha^2} h(\mu_{\alpha})=\frac{-1}{\log \frac{b}{a}} \left( 
\frac{1}{\alpha - \log a} +\frac{1}{\log b - \alpha} \right).
 \end{eqnarray}
Note  that this function has two asymptotes at $\log a$ and at $\log b$.
Indeed
\[ \lim_{ \alpha \to \log a} \frac{d^2}{d \alpha^2} h(\mu_{\alpha}) = - \infty \textrm{ and } \lim_{ \alpha \to \log b} \frac{d^2}{d \alpha^2} h(\mu_{\alpha})= - \infty. \]   
In order to obtain the maximum of $\frac{d^2}{d \alpha^2} h(\mu_{\alpha})$ we compute the third derivative of the entropy function,
 \[ \frac{d^3}{d \alpha^3} h(\mu_{\alpha})=    \frac{1}{\log \frac{b}{a}} 
\left( \frac{1}{(\alpha - \log a)^2} -\frac{1}{(\log b- \alpha)^2}    \right), \]
which is equal to zero if and only if
\[ \alpha=\alpha_M=  \frac{\log b + \log a}{2} . \]
Moreover,
\[ \frac{d^4}{d \alpha^4} h(\mu_{\alpha})=    \frac{-2}{\log \frac{b}{a}} 
\left( \frac{1}{(\alpha - \log a)^3} +\frac{1}{(\log b- \alpha)^3}    \right), \]
In particular
\[  \frac{d^4}{d \alpha^4} h(\mu_{\alpha}) \leq 0.  \]
\end{proof}

Now we combine the Proposition \ref{segunda} with Lemma~\ref{et} in order
to characterize inflection points of the spectrum which are stable under small changes of the slopes $a$ and $b$.
Namely, transversal intersections between the graphs of  $2  \frac{d L}{d \alpha} (\alpha)$ and
 $\frac{d^2}{d \alpha^2} h(\mu_{\alpha})$.

\begin{lema} \label{tran}
Assume that $\alpha_i \in (\log a, \log b)$ with $\alpha_i \neq \alpha_M$ is such that $$2  \frac{d L}{d \alpha} (\alpha_i) = 
\frac{d^2}{d \alpha^2} h(\mu_{\alpha}) \Big|_{\alpha = \alpha_i}.$$
Then the intersection of the graphs of $ \frac{d L}{d \alpha} (\alpha)$ and 
$ \frac{d^2}{d \alpha^2} h(\mu_{\alpha}) $ at $\alpha = \alpha_i$ is transversal.
\end{lema}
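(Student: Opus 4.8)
The plan is to reduce transversality to a single non-vanishing statement about the entropy, and then to read that off the formulas already computed. Recall that the graphs of two smooth functions meet transversally at a common point $\alpha_i$ precisely when their tangent lines there are distinct, i.e. when the two functions have different first derivatives at $\alpha_i$. Applied to the functions $\alpha \mapsto 2\frac{dL}{d\alpha}(\alpha)$ and $\alpha \mapsto \frac{d^2}{d\alpha^2}h(\mu_\alpha)$ --- both of which are explicit real-analytic functions on $(\log a,\log b)$ by Lemma~\ref{2BranchFormula} and equation~\eqref{2-derivative} --- transversality at $\alpha_i$ amounts to the inequality
\[
2\,\frac{d^2L}{d\alpha^2}(\alpha_i) \;\neq\; \frac{d^3}{d\alpha^3}h(\mu_\alpha)\Big|_{\alpha=\alpha_i}.
\]

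The left-hand term vanishes: by Lemma~\ref{et}, the hypothesis $2\frac{dL}{d\alpha}(\alpha_i) = \frac{d^2}{d\alpha^2}h(\mu_\alpha)\big|_{\alpha=\alpha_i}$ is exactly the condition that is equivalent to $\frac{d^2L}{d\alpha^2}(\alpha_i)=0$. So it remains only to check that $\frac{d^3}{d\alpha^3}h(\mu_\alpha)\big|_{\alpha=\alpha_i}\neq 0$.

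This is immediate from the computation carried out in the proof of Proposition~\ref{segunda}, where we found
\[
\frac{d^3}{d\alpha^3}h(\mu_\alpha)=\frac{1}{\log\frac{b}{a}}\left(\frac{1}{(\alpha-\log a)^2}-\frac{1}{(\log b-\alpha)^2}\right),
\]
whose only zero in $(\log a,\log b)$ is $\alpha=\alpha_M=(\log a+\log b)/2$ (the alternative root $\alpha-\log a=-(\log b-\alpha)$ forces $\log a=\log b$, which is excluded). Since by hypothesis $\alpha_i\neq\alpha_M$, we get $\frac{d^3}{d\alpha^3}h(\mu_\alpha)\big|_{\alpha=\alpha_i}\neq 0$, and hence $2\frac{d^2L}{d\alpha^2}(\alpha_i)=0\neq\frac{d^3}{d\alpha^3}h(\mu_\alpha)\big|_{\alpha=\alpha_i}$, which is the asserted transversality.

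I do not expect a genuine obstacle here: the entire content of the lemma is the observation that, at a meeting point of the two graphs, Lemma~\ref{et} pins $\frac{d^2L}{d\alpha^2}$ to zero, so that transversality is governed by the single explicit quantity $\frac{d^3}{d\alpha^3}h(\mu_\alpha)$, which Proposition~\ref{segunda} shows is nonzero away from $\alpha_M$. The only places that require a little care are phrasing transversality of the two graphs correctly (equal values together with unequal first derivatives) and keeping track of the factor $2$, i.e. differentiating $2\frac{dL}{d\alpha}$ rather than $\frac{dL}{d\alpha}$.
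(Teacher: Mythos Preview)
Your proof is correct and follows essentially the same approach as the paper: both reduce transversality to showing that $2\tfrac{d^2L}{d\alpha^2}(\alpha_i)\neq \tfrac{d^3}{d\alpha^3}h(\mu_\alpha)\big|_{\alpha=\alpha_i}$, invoke Lemma~\ref{et} to see that the left side vanishes, and then use the third-derivative computation from Proposition~\ref{segunda} to conclude that the right side is nonzero when $\alpha_i\neq\alpha_M$. The only cosmetic difference is that the paper phrases this as a contradiction argument, while you argue directly.
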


\begin{proof}
By contradiction, suppose  that the intersection is not transversal, that is
\begin{equation} \label{trans}
 2  \frac{d^2 L}{d \alpha^2} (\alpha_i) =  \frac{d^3 h(\mu_\alpha)}{d \alpha^3} (\alpha_i).
 \end{equation} 
Since $$2 \frac{d}{d \alpha} L(\alpha_i) =  \frac{d^2}{d \alpha^2}h (\alpha_i)$$ we have that $ \frac{d^2 L}{d \alpha^2} (\alpha_i)=0$. Therefore
equation \eqref{trans} implies that $  \frac{d}{d \alpha}h (\alpha)\Big|_{\alpha =\alpha_i}=0$. Thus, by Proposition \ref{segunda},
$\alpha_i= \alpha_M$. 
\end{proof}

Finally, we are ready to prove the Theorem.

\begin{proof}[Proof of Theorem~\ref{A}]
From equation \eqref{lyapunov} we obtain that
\begin{eqnarray*}
2 \frac{d L}{d \alpha}(\alpha)= \frac{2}{\alpha^2 \log \frac{b}{a} } \left[ \log b \log
  \left( \frac{\log b - \alpha}{\log {b} - \log {a}} \right) - \log a
  \log \left( \frac{\alpha - \log a}{\log b - \log{a}} \right)
  \right].
 \end{eqnarray*} 
Therefore,
\begin{eqnarray*}
2 \frac{d L}{d \alpha} (\alpha) \Big|_{\alpha=\alpha_M} &=& -\frac{8 \log 2}{ (\log a + \log b)^2}.
\end{eqnarray*}
Moreover,
\begin{eqnarray*}
\lim_{\alpha \to \log a} 2\frac{d L}{d \alpha}(\alpha) & = & \infty, \\ 
\lim_{\alpha \to \log b} 2\frac{d L'}{d \alpha} (\alpha) & = &  -\infty =  \lim_{\alpha \to \log b} \left( 2\frac{d L}{d \alpha}(\alpha) -  \frac{d^2}{d \alpha^2} h(\mu_{\alpha}) \right).
\end{eqnarray*}
Hence, a sufficient 
condition to have  two transversal
intersections of the graphs of $2\frac{d L}{d \alpha}$ and $\frac{d^2}{d \alpha^2}h$ 
is  $2\frac{d L}{d \alpha}  (\alpha_M) >  \frac{d^2}{d \alpha^2}h(\alpha_M)$.
Now, 
$$2 \frac{d L}{d \alpha}  (\alpha_M) \ge  \frac{d^2}{d \alpha^2}h(\alpha_M) \iff \dfrac{\log b}{\log a} \geq \dfrac{\sqrt{2 \log 2} + 1}{\sqrt{2 \log 2} - 1},$$
where equality holds in one equation if and only if it holds in the other.
To finish the proof of the theorem we must check that there exist values of
$a$ and $b$ such that 
$$\dfrac{\log b}{\log a} < \dfrac{\sqrt{2 \log 2} + 1}{\sqrt{2 \log 2} - 1},$$
for which the corresponding graphs of $2\frac{d L}{d \alpha}$ and $\frac{d^2}{d \alpha}h$ do not intersect.
For this purpose let $a >1$ and $b = \exp(\varepsilon) a$. To ease notation
we introduce $$x = \dfrac{\alpha-\log a}{\log b - \log a},$$
and note that $0 \leq x \leq 1$.
It follows that
$$ \varepsilon^2 x (1 -x) 2 \frac{d L}{d \alpha} (\alpha) = \dfrac{2 \varepsilon}{\alpha^2} g(x),$$
where $g:[0,1] \rightarrow \mathbb{R}$ is the continuous function such that
for all $x \in (0,1)$,
$$g(x) = \varepsilon x (1-x) \log (1-x) + (\log a) x (1-x) \log (1-x) - 
(\log a) (1-x) x \log x.$$ 
Hence, uniformly for $\alpha \in [\log a, \log b]$, we have that
$ \varepsilon^2 x (1 -x) 2 \frac{d}{d \alpha}L(\alpha) \to 0$, as $\varepsilon \searrow 0$.
However, from equation~\eqref{2-derivative} 
$$ \varepsilon^2 x (1 -x) \frac{d^2}{d \alpha^2} h(\mu_{\alpha}) = -1.$$
Thus, for $\varepsilon >0$ sufficiently small, we have that $2\frac{d L}{d \alpha} (\alpha) > \frac{d^2}{d \alpha^2}h(\alpha)$ 
for all $\alpha \in [\log a, \log b]$. 
\end{proof}

\begin{eje}[Non-concave Lyapunov spectrum] \label{ex}
{\em Let $T$ be a linear cookie-cutter map with two branches of slopes
$a=\exp(1)$ and $b= \exp(45)$. The corresponding Lyapunov spectrum is not concave since
\[45=\dfrac{\log b}{\log a} > \dfrac{\sqrt{2 \log 2} + 1}{\sqrt{2 \log 2} - 1}. \]
 See Figure~\ref{2BranchFigure} (right).}
\end{eje}

\begin{figure} 

\centerline{
\includegraphics[width=4cm]{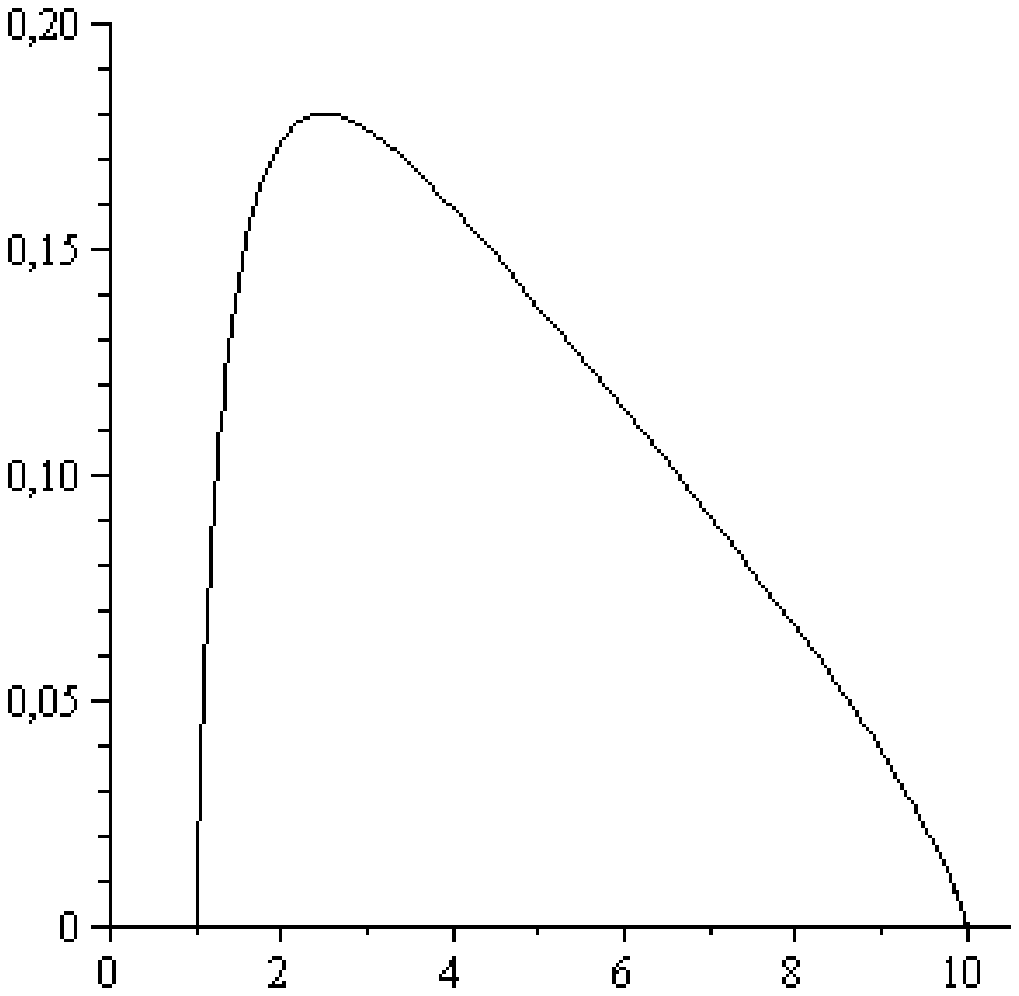}
\includegraphics[width=4cm]{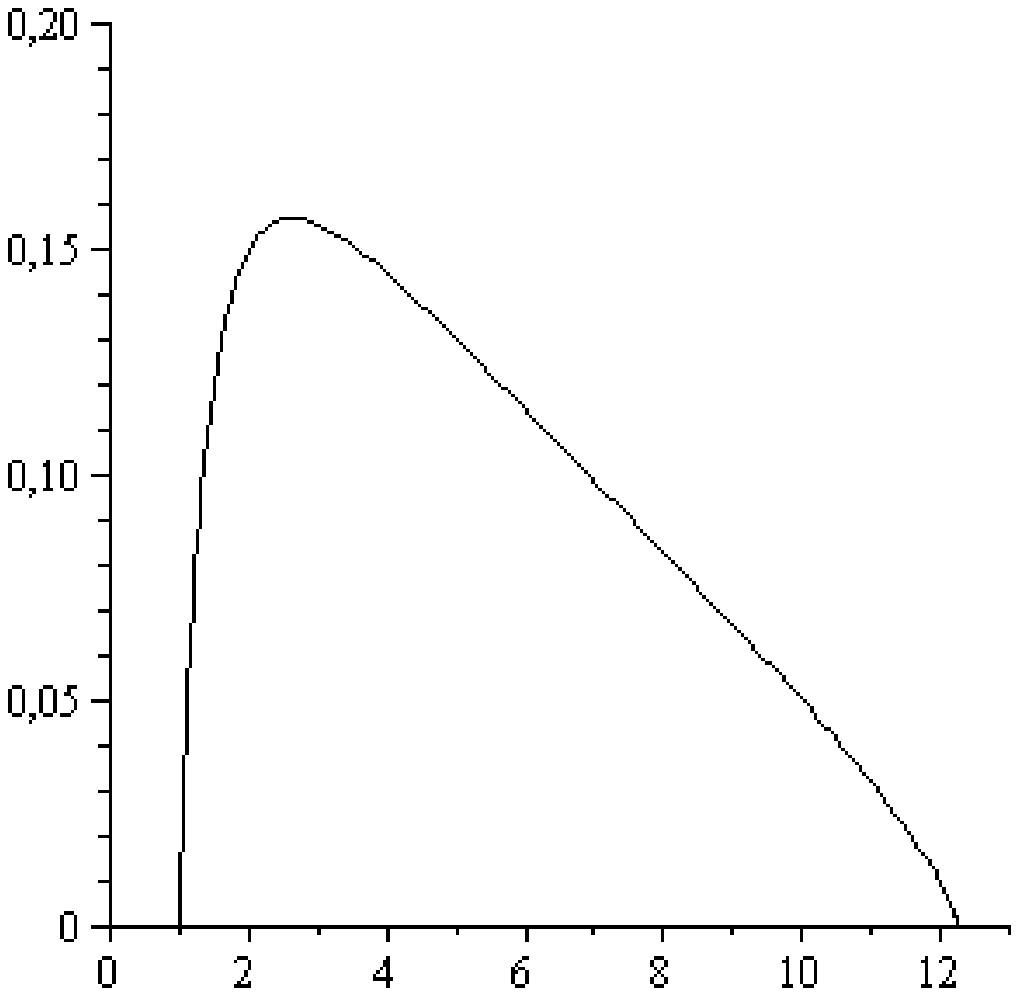}
\includegraphics[width=4cm]{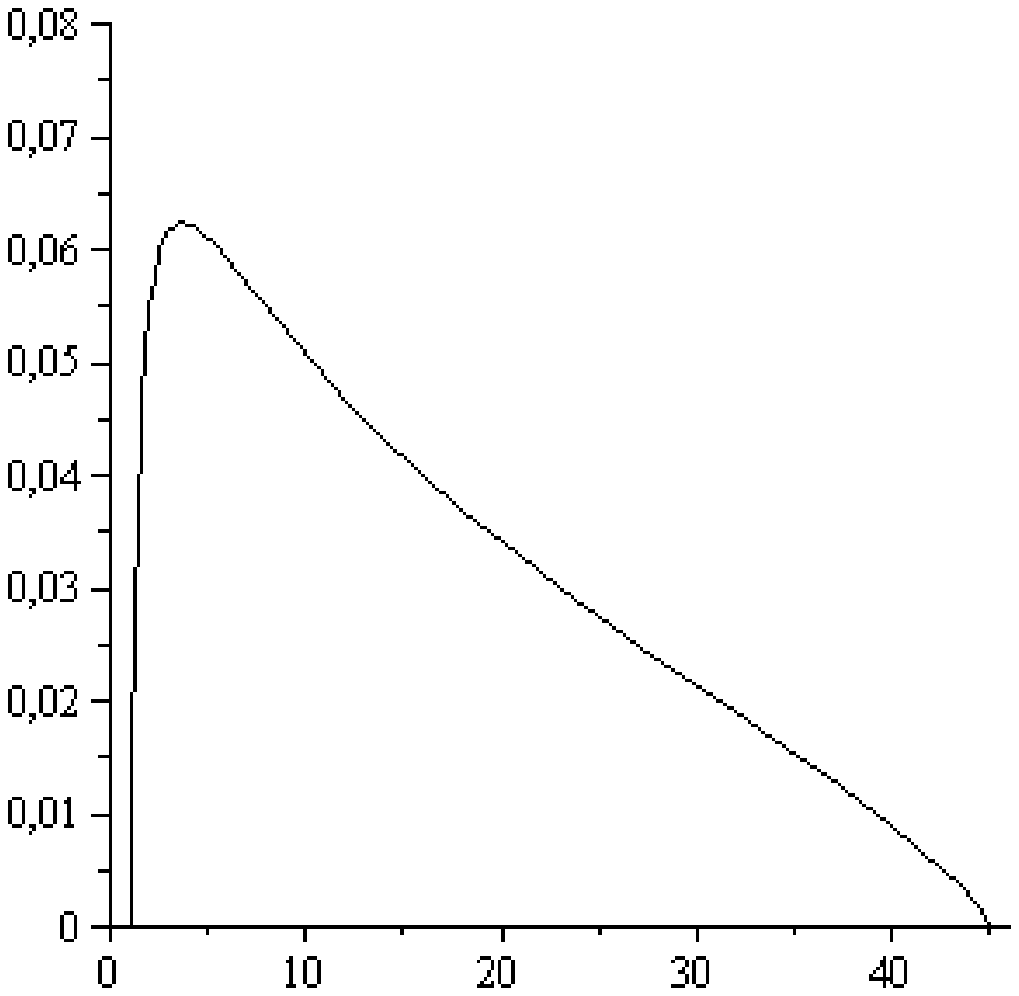}}
\caption{The Lyapunov spectra for maps with two linear branches of slopes $a$ and $b$. For all
graphs $a=\exp(1)$. At the left a concave spectra corresponding to  $b= \exp(10)$. At the center, also concave
but $b \approx (\sqrt{2 \log 2} + 1) / (\sqrt{2 \log 2} - 1)$ is the bifurcation value of the slope. At the right,
the non-concave graph corresponding to  $b=\exp(45)$. \label{2BranchFigure}}
\end{figure}

\section{The non-linear case} \label{nl}
In this section we prove Theorem~\ref{B}, which gives 
a general condition that ensures the existence of inflection points for the Lyapunov spectrum. 
In the next section we will show that the mentioned condition is explicit for maps with linear branches.

\begin{proof}[Proof of Theorem~\ref{B}]
From equation~\eqref{lt} we have
\[L(\alpha(t))= \frac{P(-t \log |T'|) +t \alpha(t)}{\alpha(t)}.\]
Then, using $f(g(t))'$ to denote the derivative of $f \circ g$ with respect to $t$, we have
\[L(\alpha(t))'= \frac{1}{\alpha(t)^2} \left( - \alpha'(t) P(-t \log |T'|) + \alpha(t) P(-t\log |T'|)' +\alpha(t)^2     \right).\]
Recall that 
\begin{equation} \label{rela}
\alpha(t) = -P(-t \log |T'|)'.
\end{equation}
 Hence
	\[L(\alpha(t))' = -\frac{\alpha'(t) P(-t \log |T'|)}{ \alpha(t)^2}. \]
Therefore, making use of equation \eqref{rela} we obtain that
\begin{eqnarray*}
\frac{d^2 L}{d \alpha^2} (\alpha(t)) 
& =& \frac{1}{\alpha'(t)} \cdot \left( \frac{L(\alpha(t))'}{\alpha'(t)} \right)' \\
& = & \frac{1}{\alpha'(t)} \cdot \left( \frac{-P(-t \log |T'|)}{\alpha(t)^2}\right)' \\ 
& =& \frac{1}{\alpha'(t)} \cdot \frac{2\alpha'(t) P(-t \log |T'|) - \alpha(t) P(-t \log |T'|)'}{\alpha(t)^3}\\ 
& =& \frac{1}{\alpha'(t)} \cdot \frac{-2 \alpha'(t) P(-t \log |T'|) + \alpha(t)^2}{\alpha(t)^3} \cdot\end{eqnarray*}
Since 
$$\alpha' (t) = \sigma^2 (t) = P(-t \log |T'|)'' < 0$$
for all $t \in \mathbb{R}$, we conclude that
\begin{equation}
\label{sindividir}
\frac{d^2 L}{d \alpha^2} (\alpha(t)) \le 0 \iff -2 \sigma^2(t)  P(-t \log |T'|) + \alpha(t)^2 \ge 0.
\end{equation}
The theorem follows since $P(-t \log |T'|)$ is positive if and only if $t < t_d$.
\end{proof}

Let us stress that even though Theorem \ref{B} is very general, it is not easy 
to deduce explicit conditions on $T$ in order to guarantee 
the absence or presence of inflection points (in contrast with Theorem \ref{A}).

\section{The linear case} \label{lc}
Throughout this section we consider a linear cookie-cutter map with $n$ branches of slopes $m_1, \dots, m_n$.
We record a straightforward computation in the next lemma. 
This trivial calculation allow us to ''effectively'' draw the graphs of the corresponding Lyapunov spectra (e.g. see Figure~\ref{3BranchFigure} ).

\begin{lema}
  \label{graph}
Consider a linear cookie-cutter  map $T$ with $n$ branches of  slopes $m_1, \dots, m_n$.
  Then,
  \begin{eqnarray*}
    \alpha(t) &= & \frac{\sum_{i=1}^n  |m_i|^t \log |m_i|}{\sum_{i=1}^n |m_i| ^t},\\
    L(\alpha(t)) &=& \frac{\left(\sum_{i=1}^n |m_i|^t \right) \log \left(\sum_{i=1}^n |m_i|^t \right)}{\sum_{i=1}^n | m_i|^t \log |m_i|} -t.
  \end{eqnarray*}
\end{lema}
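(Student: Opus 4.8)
The plan is to reduce everything to the explicit form of the topological pressure for a linear cookie‑cutter and then merely differentiate and substitute into the identities~\eqref{rela} and~\eqref{lt}, which are already at our disposal. So the only step with real content is the computation of $P(-t\log|T'|)$, and the remainder is bookkeeping.

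For that step, observe that since $T$ is affine on each $I_i$ we have $|T'(x)|=|m_i|$ for every $x\in I_i$, so the potential $-t\log|T'|$ is constant (equal to $-t\log|m_i|$) on the branch $I_i$; in particular it is locally constant. Because each branch maps onto $[0,1]$ and $T$ is expanding, the restriction $T|_{\Lambda}$ is topologically conjugate to the full one‑sided shift on $n$ symbols, and under this conjugacy $-t\log|T'|$ depends only on the zeroth coordinate. For such a potential the equilibrium state $\mu_t$ is the Bernoulli measure with weights $p_i(t)=\dfrac{|m_i|^{-t}}{\sum_{j=1}^n|m_j|^{-t}}$: maximizing $h(\mu)-t\int\log|T'|\,d\mu=-\sum_i p_i\log p_i-t\sum_i p_i\log|m_i|$ over probability vectors $(p_1,\dots,p_n)$ is a straightforward Lagrange multiplier computation, and by \cite[Theorem~9.16]{wa} --- exactly as in the proof of Lemma~\ref{2BranchFormula} --- this Bernoulli measure is the unique equilibrium state among all $T$‑invariant measures. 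Evaluating the free energy at $\mu_t$, or equivalently noting that the Ruelle transfer operator of $-t\log|T'|$ sends the constant function $1$ to the constant $\sum_i|m_i|^{-t}$ (hence this is its leading eigenvalue), gives the closed form
\[ P(-t\log|T'|)=\log\Big(\sum_{i=1}^n|m_i|^{-t}\Big). \]

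The rest is routine differentiation. By~\eqref{rela},
\[ \alpha(t)=-\frac{d}{dt}P(-t\log|T'|)=\frac{\sum_{i=1}^n|m_i|^{-t}\log|m_i|}{\sum_{i=1}^n|m_i|^{-t}}=\int\log|T'|\,d\mu_t, \]
which is strictly positive since $|T'|>1$, so no division by zero occurs below. Substituting $P(-t\log|T'|)$ and $\alpha(t)$ into $L(\alpha(t))=\frac{1}{\alpha(t)}\big(P(-t\log|T'|)+t\alpha(t)\big)$ from~\eqref{lt} yields
\[ L(\alpha(t))=\frac{\big(\sum_{i=1}^n|m_i|^{-t}\big)\log\big(\sum_{i=1}^n|m_i|^{-t}\big)}{\sum_{i=1}^n|m_i|^{-t}\log|m_i|}+t, \]
and the two displayed identities are recovered after the change of parameter $t\mapsto-t$ used in the statement. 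I do not expect a genuine obstacle --- the paper itself flags this as a straightforward computation --- the only points deserving attention being the justification of the closed‑form pressure (which rests on local constancy of $-t\log|T'|$ together with the cited thermodynamic formalism) and keeping the orientation of the parameter $t$ consistent with the conventions of the statement.
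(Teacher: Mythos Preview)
Your proposal is correct and follows essentially the same route as the paper: compute the pressure in closed form, differentiate via~\eqref{rela} to get $\alpha(t)$, and substitute into~\eqref{lt} for $L(\alpha(t))$. The paper simply cites \cite[Example~1]{sa2} for the pressure formula, whereas you supply a short self-contained derivation; you also make explicit the reparametrization $t\mapsto -t$ that reconciles the standard convention $P(-t\log|T'|)=\log\sum_i|m_i|^{-t}$ with the exponent $|m_i|^{t}$ appearing in the statement, a point the paper glosses over.
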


Note that the graph of $L: [\min\{\log|m_i|\}, \max\{\log|m_i|\}] \to \mathbb{R}$ 
coincides with the graph of $\mathbb{R} \ni t \mapsto (\alpha(t), L(t))$.

\begin{figure} 
\centerline{
\includegraphics[width=4cm]{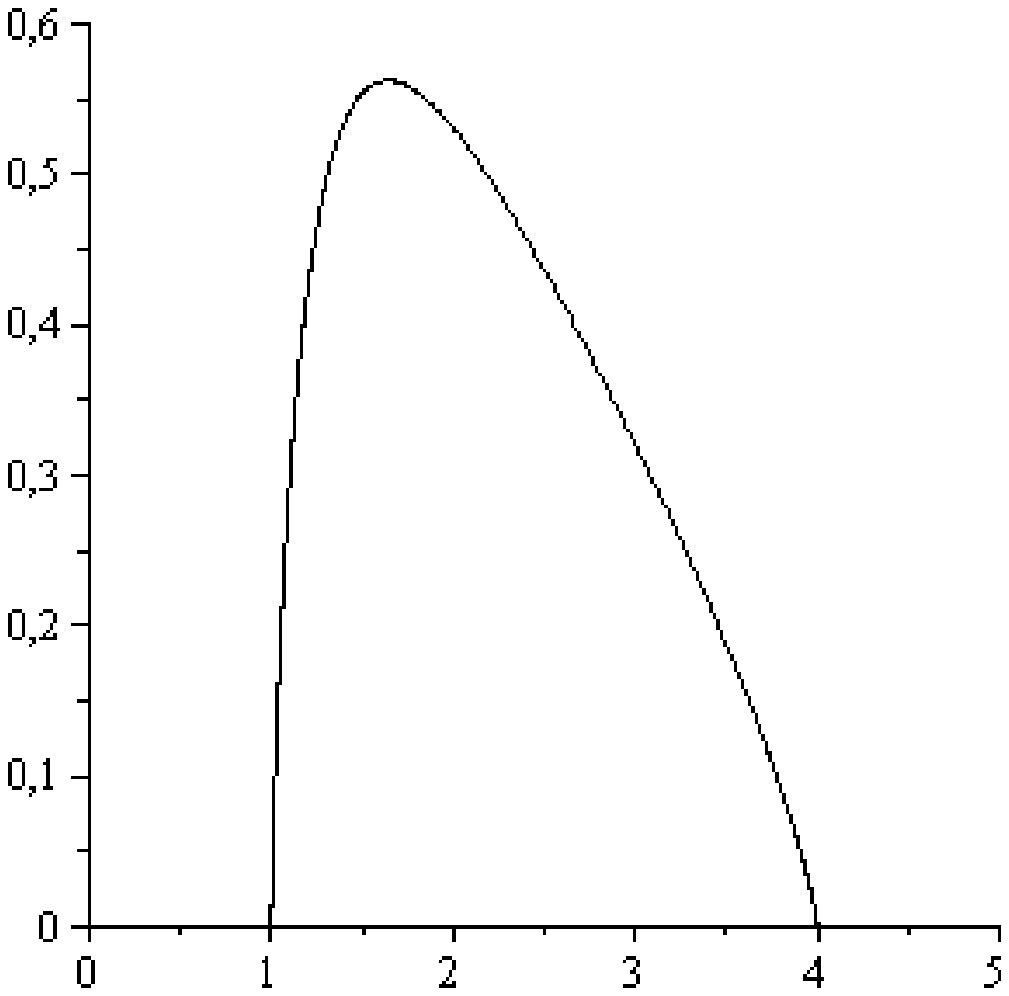}
\includegraphics[width=4cm]{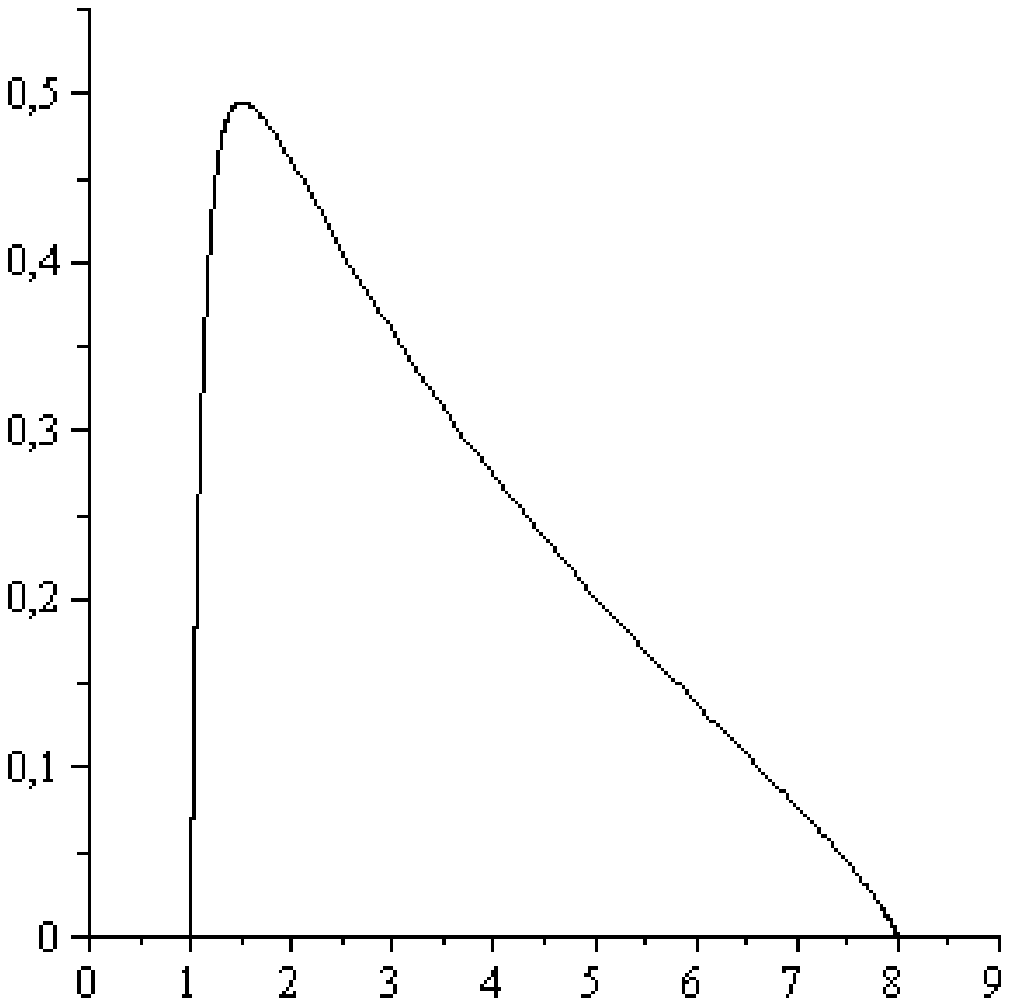}
\includegraphics[width=4cm]{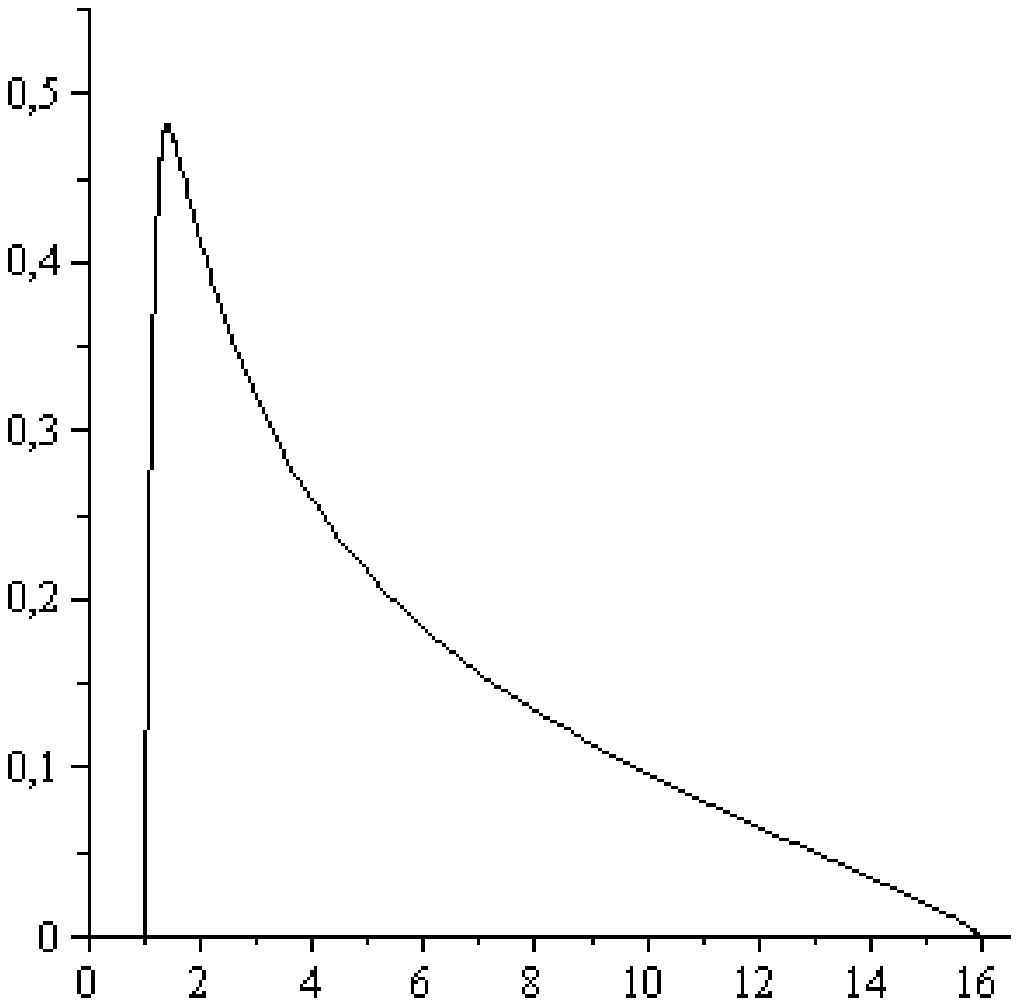}}
\caption{The Lyapunov spectra for maps with three linear branches of slopes $a =\exp(1) < b=\exp(2) < c$.  
At the left a concave spectra corresponding to  $c= \exp(4)$. At the center, a non-concave
corresponding to  $c= \exp(8)$. At the right,
another non-concave spectrum corresponding to  $b=\exp(16)$. \label{3BranchFigure}}
\end{figure}

\begin{proof}
The pressure function of a linear cookie-cutter map has a simple form (see, for instance,  \cite[Example 1]{sa2}),
\begin{equation*}
P(-t \log |T'|) =\log  \sum_{i=1}^n |m_i|^t .
\end{equation*}
Hence, 
\begin{equation} \label{al}
\alpha(t)  =  -P(-t \log |T'|)' = \frac{\sum_{i=1}^n  |m_i|^t \log |m_i|}{\sum_{i=1}^n |m_i|^t}.
\end{equation}
The formula for $L(\alpha(t))$ follows from Equation \eqref{lt}.
\end{proof}

\begin{proof}[Proof of Corollary~\ref{C}]
From the  formula \eqref{al} for $\alpha(t)$, it follows that:
$$\sigma^2(t) = \frac{\sum_{i=1}^n  |m_i|^t (\log |m_i|)^2}{\sum_{i=1}^n |m_i| ^t} - 
\left(\frac{\sum_{i=1}^n  |m_i|^t \log |m_i|}{\sum_{i=1}^n |m_i| ^t }\right)^2 . $$
We introduce the following notation:

\begin{eqnarray*}
  \| \log|T'| \|^2_{2,t} & = & \frac{\sum_{i=1}^n  |m_i|^t (\log |m_i|)^2}{\sum_{i=1}^n |m_i| ^t}\\
 \| \log|T'| \|_{1,t} & = &\frac{\sum_{i=1}^n  |m_i|^t \log |m_i|}{\sum_{i=1}^n |m_i| ^t }
\end{eqnarray*}
Now from Equation~\eqref{sindividir}, we have that $L$ is concave if and only if, for all $t \in \mathbb{R}$, 

$$
2 P(-t \log |T'|) \left( \| \log|T'| \|^2_{2,t} - \| \log|T'| \|^2_{1,t} \right)  \le \| \log|T'| \|^2_{1,t}.$$
\end{proof}

\end{document}